\newtheorem{theorem}{Theorem}
\newtheorem{corollary}[theorem]{Corollary}
\newtheorem{lemma}[theorem]{Lemma}
\newtheorem{proposition}[theorem]{Proposition}
\newenvironment{proof}[1][Proof]{\textbf{#1.} }{\ \rule{0.5em}{0.5em}}
\begin{document}

\title{\bf Hoeffding spaces and Specht modules}
\author{Giovanni PECCATI\thanks{
\'{E}quipe \textit{Modal'X}. Universit\'{e} Paris Ouest - Nanterre La D\'{e}%
fense. 200, Avenue de la R\'{e}publique, 92000 Nanterre France. E-mail:
\texttt{giovanni.peccati@gmail.com}} \ \ and  \ Jean-Renaud PYCKE\thanks{%
Department of mathematics, University of \'{E}vry, France. E-mail: \texttt{%
jrpycke@maths.univ-evry.fr}}}
\date{April 20, 2009}
\maketitle

\begin{abstract}
It is proved that each Hoeffding space associated with a random permutation
(or, equivalently, with extractions without replacement from a finite
population) carries an irreducible representation of the symmetric group,
equivalent to a two-block Specht module.

\smallskip

\noindent\textbf{Key words -- }Exchangeability; Finite Population
Statistics; Hoeffding Decompositions; Irreducible Representations; Random
Permutations; Specht Modules; Symmetric Group.

\smallskip

\noindent \textbf{MSC Classification -- }05E10; 60C05
\end{abstract}

\section{Introduction}

Let $X_{\left( m\right) }=\left( X_{1},...,X_{m}\right) $ ($m\geq 2$) be a
sample of random observations. According e.g. to \cite{PecAop}, we say that $%
X_{\left( m\right) }$ is \textsl{Hoeffding-decomposable}\ if every symmetric
statistic of $X_{\left( m\right) }$ can be written as an orthogonal sum of
symmetric $U$-statistics with degenerated kernels of increasing orders. In
the case where $X_{\left( m\right) }$ is composed of i.i.d. random
variables, Hoeffding decompositions are a classic and very powerful tool for
obtaining limit theorems, as $m\rightarrow \infty $, for sequences of
general symmetric statistics of the vectors $X_{\left( m\right) }$. See e.g.
\cite{Serf}, or the references indicated in the introduction to \cite{PecAop}%
, for further discussions in this direction.

In recent years, several efforts have been made in order to provide a
characterization of Hoeffding decompositions associated with \textsl{%
exchangeable} (and not necessarily independent) vectors of observations. See
El-Dakkak and Peccati \cite{ElPec} and Peccati \cite{PecAop} for some
general statements; see Bloznelis \cite{Bloz2005}, Bloznelis and G\"{o}tze
\cite{BG1, BG2} and Zhao and Chen \cite{ZhaoChen} for a comprehensive
analysis of Hoeffding decompositions associated with extractions without
replacement from a finite population.

In the present note, we are interested in building a new explicit connection
between the results of \cite{BG1, BG2, ZhaoChen} and the irreducible
representations of the symmetric groups $\mathfrak{S}_{n}$, $n\geq 2$. In
particular, our main result is the following.

\begin{theorem}
\label{T : Main Intro}Let $1\leq m\leq n/2$, and let $X_{\left( m\right)
}=\left( X\left( 1\right) ,...,X\left( m\right) \right) $ be a random vector
obtained as the first $m$ extractions without replacement from a population
of $n$ individuals. For $l=1,...,m$, let $SH_{l}$ be the $l$th symmetric
Hoeffding space associated with $X_{\left( m\right) }$ (that is, $SH_{l}$ is
the vector space of all symmetric $U$-statistics with a completely
degenerated kernel of order $l$). Then, for every $l=1,...,m$, there exists
an action of $\mathfrak{S}_{n}$ on $SH_{l}$, such that $SH_{l}$ is an
irreducible representation of $\mathfrak{S}_n$. This representation is equivalent to
a Specht module of shape $\left( n-l,l\right) .$
\end{theorem}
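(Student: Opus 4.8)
The plan is to translate the statement into the representation theory of the permutation module on $m$-subsets, and then to identify the Hoeffding filtration with the canonical filtration by two-row Specht modules. Write $[n]=\{1,\dots,n\}$ for the population. Since $X_{(m)}$ is obtained by extractions without replacement, any symmetric statistic depends only on the unordered set $\{X(1),\dots,X(m)\}$, which is a uniformly distributed $m$-subset of $[n]$. Hence the space of square-integrable symmetric statistics is identified, as an inner-product space, with $L^{2}\big(\binom{[n]}{m}\big)$, the functions on $m$-subsets with the uniform inner product. The natural action of $\mathfrak{S}_{n}$ on $[n]$ induces an action on $\binom{[n]}{m}$, hence on these functions; because the uniform measure on $m$-subsets is $\mathfrak{S}_{n}$-invariant, the action is by isometries. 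As an $\mathfrak{S}_{n}$-module this is exactly the Young permutation module $M^{(n-m,m)}$, and this provides the required action on the ambient space containing each $SH_{l}$.

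Next I would invoke the classical decomposition $M^{(n-m,m)}=\bigoplus_{l=0}^{m}S^{(n-l,l)}$ into two-row Specht modules, which is legitimate precisely because $m\le n/2$ forces each $(n-l,l)$ to be a genuine partition. The decisive feature is that this decomposition is \emph{multiplicity-free}, so the isotypic components are uniquely determined submodules; recall also $\dim S^{(n-l,l)}=\binom{n}{l}-\binom{n}{l-1}$, these dimensions summing to $\binom{n}{m}=\dim M^{(n-m,m)}$.

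The core of the argument is to match two filtrations. On the probabilistic side, set $W_{\le l}=SH_{0}\oplus\cdots\oplus SH_{l}$, the space of symmetric $U$-statistics of order at most $l$; by the defining orthogonality of the Hoeffding decomposition, $SH_{l}=W_{\le l}\ominus W_{\le l-1}$. On the representation side, set $F_{l}=\bigoplus_{j\le l}S^{(n-j,j)}$. I claim $W_{\le l}=F_{l}$. To see this, consider the $\mathfrak{S}_{n}$-equivariant ``up'' operator $U_{l}\colon M^{(n-l,l)}\to M^{(n-m,m)}$ sending a kernel $g$ on $l$-subsets to the $U$-statistic $A\mapsto\sum_{B\subseteq A,\ |B|=l}g(B)$. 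Its image is exactly $W_{\le l}$, since every symmetric $U$-statistic of order $\le l$ arises this way (lower orders being absorbed into order $l$). For $l\le m\le n/2$ one has $l+m\le 2m\le n$, so $U_{l}$ is injective by the classical full-rank property of subset-inclusion matrices; its image is therefore an $\mathfrak{S}_{n}$-submodule isomorphic to $M^{(n-l,l)}\cong\bigoplus_{j\le l}S^{(n-j,j)}$. By multiplicity-freeness of the ambient module, any submodule of this isomorphism type must coincide with $F_{l}$, giving $W_{\le l}=F_{l}$.

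Combining the filtrations yields $SH_{l}=W_{\le l}\ominus W_{\le l-1}=F_{l}\ominus F_{l-1}=S^{(n-l,l)}$, which is irreducible, establishing all three assertions simultaneously. The step I expect to be the main obstacle is the identification $W_{\le l}=F_{l}$: one must rigorously connect the analytic notion of a \emph{completely degenerate kernel of order $l$} with the orthogonal complement $F_{l}\ominus F_{l-1}$, and verify the injectivity of $U_{l}$ exactly where $m\le n/2$ is used. Here the explicit characterizations of the Hoeffding decomposition for sampling without replacement obtained in \cite{BG1,BG2,ZhaoChen} should supply the concrete degenerate kernels needed to pin down $W_{\le l}$ and thereby make the matching of the two filtrations airtight.
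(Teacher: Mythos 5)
Your proposal is correct and follows essentially the same route as the paper: your ``up'' operator $U_{l}$ is precisely the linear extension of $\mathbf{1}_{\mathbf{i}_{(l)}}\mapsto \eta_{\mathbf{i}_{(l)}}$ (the inverse of the map $\tau$ in Lemma \ref{L : Ugo}), so your identification of $W_{\le l}$ with an $\mathfrak{S}_{n}$-submodule isomorphic to $M^{(n-l,l)}$ is the paper's key lemma, and your concluding step via Young's rule, multiplicity-freeness, and uniqueness of the decomposition (\ref{spechtDec}) is exactly the paper's proof of Theorem \ref{T : Main}. If anything, you are slightly more explicit than the paper on the injectivity of $U_{l}$ (full rank of the subset-inclusion matrices when $l+m\le n$), a point the paper subsumes in the assertion that the $\eta_{\mathbf{i}_{(l)}}$ form a basis of $SU_{l}$, and on deferring the equivalence of the two descriptions of $SH_{l}$ to the known characterization (Proposition \ref{P : supercali}), which the paper likewise imports from the cited literature.
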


We refer the reader to the forthcoming Section \ref{S : Backg} for some
basic results on the representations of the symmetric group and two-block
Specht modules. We will see that Theorem \ref{T : Main Intro} provides
\textit{de facto }a new probabilistic characterization of two-block Specht
modules, as well as some original insights into the combinatorial structure
of Hoeffding spaces. Observe that the case
where $n/2<m\leq n$ can be reduced to the framework of present paper by standard
arguments (see for instance \cite[Proposition 1]{BG1}). One should note that a connection between
decompositions of symmetric statistics and representations of $\mathfrak{S}%
_{n}$ is already sketched in Diaconis' celebrated monograph \cite{Diaconis}:
in particular, the results of the present paper can be regarded as a
probabilistic counterpart to the \textit{spectral analysis on homogeneous
spaces }developed in Chapters 7 and 8 of \cite{Diaconis}.

The rest of this note is organized as follows. In Section \ref{S : Backg} we
provide some background on the representations of the symmetric group.
Sections \ref{S : URP} and \ref{S : Hoeff} focus, respectively, on uniform
random permutations and Hoeffding spaces. Section \ref{S : HS//SM} contains
the statements and proofs of our main results.

\section{Background\label{S : Backg}}

For future reference, we recall that a $k$-\textsl{block partition} of the
integer $n\geq 2$ is a $k$-dimensional vector of the type $\lambda =\left(
\lambda _{1},...,\lambda _{k}\right) $, such that: (i) each $\lambda _{i}$
is a strictly positive integer, (ii) $\lambda _{i}\geq \lambda _{i+1}$, and
(iii) $\lambda _{1}+\cdot \cdot \cdot +\lambda _{k}=n$. One sometimes writes
$\lambda \vdash n$ to indicate that $\lambda $ is a partition of $n$.

We also write $\left[ n\right] =\left\{ 1,...,n\right\} $ to indicate the
set of the first $n$ positive integers. Finally, given a finite set $A$, we
denote by $\mathfrak{S}_{A}$ the group of all permutations of $A$, and we
use the shorthand notation $\mathfrak{S}_{\left[ n\right] }=\mathfrak{S}_{n}$%
, $n\geq 1$. In other words, when writing $x\in \mathfrak{S}_{A}$,
we mean that $$x : A\rightarrow A : a\mapsto x(a)$$ is a bijection from $A$ to itself.

\subsection{Some structures associated with two-block partitions}

We now introduce some classic definitions and notation related to tableaux
and tabloids; see Sagan \cite[Chapter 2]{Sagan} (from which we borrow most
of our terminology and notational conventions) for any unexplained concept
or result. For the rest of the section, we fix two integers $n$ and $m$,
such that $1\leq m\leq n/2$. Observe that $n-m\geq m$, and therefore the
vector $\left( n-m,m\right) $ is a two-block partition\textsl{\ }of the
integer $n$.

\bigskip

\noindent \textbf{Remark. }It is sometimes useful to adopt a graphical
representation of tableaux and tabloids by means of \textsl{Ferrer diagrams}%
. Since we uniquely deal with two-block tableaux and tabloids, and
for the sake of brevity, in what follows we shall not make use of this representation. See
e.g. \cite[Section 2.1]{Sagan} for a complete discussion of this point.

\bigskip

The following objects will be needed in the sequel.

\begin{description}
\item[--] A (Young) \textsl{tableau} $t$ of shape $\left( n-m,m\right) $ is
a pair $t=\left( i_{\left( n-m\right) };j_{\left( m\right) }\right) $ of
\textsl{ordered} vectors of the type $i_{\left( n-m\right) }=\left(
i_{1},...,i_{n-m}\right) $, $j_{\left( m\right) }=\left(
j_{n-m+1},...,j_{n}\right) $ such that $\left\{
i_{1},...,i_{n-m},j_{n-m+1},...,j_{n}\right\} =\left[ n\right] $, that is,
the union of the entries of $i_{\left( n-m\right) }$ and $j_{\left( m\right)
}$ coincides with the first $n$ integers (with no repetitions).

\item[--] The set of the \textsl{columns} of the tableau $t=\left( i_{\left(
n-m\right) };j_{\left( m\right) }\right) $, noted $\left\{
C_{1},...,C_{n-m}\right\} $, is the collection of (i) the ordered pairs
\begin{equation}
C_{1}=\left( i_{1},j_{n-m+1}\right) ,...,C_{m}=\left( i_{m},j_{n}\right)
\label{ColonneA}
\end{equation}%
(that is, the pairs composed of the first $m$ entries of $i_{\left(
n-m\right) }$ and the entries of $j_{\left( m\right) }$), and (ii) the
remaining singletons of $i_{\left( n-m\right) }$, that is,%
\begin{equation}
C_{m+1}=\text{ }i_{m+1},...,C_{n-m}=i_{n-m}\text{.}  \label{ColonneB}
\end{equation}

\item[--] For $l=1,...,n$, we write $V^{\left( n-l,l\right) }$ to indicate
the class of the $\binom{n}{l}$ subsets of $\left[ n\right] $ of size equal
to $l$. This slightly unusual notation has been chosen in order to stress
the connection between the set $V^{\left( n-l,l\right) }$ and the $\mathfrak{%
S}_{n}$-modules $M^{\left( n-l,l\right) }$ ($l\leq m$) to be defined below.
The elements of $V^{\left( n-l,l\right) }$ are denoted by ${\bf a}%
_{\left( l\right) }$, ${\bf b}%
_{\left( l\right) }$, $\mathbf{i}%
_{\left( l\right) }$, $\mathbf{j}_{\left( l\right) }$,..., and so on.

\item[--] A \textsl{tabloid }of shape $\left( n-m,m\right) $ is a two-block
partition of the set $\left[ n\right] $, of the type
\begin{equation}
\gamma =\{\mathbf{a }_{\left( n-m\right) };\mathbf{b }_{\left(
m\right) }\}=\{\left\{ a _{1},...,a _{n-m}\right\} ;\left\{ b
_{n-m+1},...,b _{n}\right\} \}.  \label{gobbi}
\end{equation}%
Of course, a tabloid $\gamma $ of shape $\left( n-m,m\right) $ as in (\ref%
{gobbi}) is completely determined by the specification of set $\mathbf{b
}_{\left( m\right) }=\left\{ b_{n-m+1},...,b_{n}\right\} \in
V^{\left( n-m,m\right) }$; to emphasize this dependence, we shall sometimes
write $\gamma =\gamma (\mathbf{b }_{\left( m\right) }).$ Note that the
mapping $\mathbf{b }_{\left( m\right) }\mapsto \gamma (\mathbf{b }%
_{\left( m\right) })$ is a bijection between $V^{\left( n-m,m\right) }$ and
the class of all tabloids of shape $\left( n-m,m\right) $.

\item[--] Given a tableau $t=\left( i_{\left( n-m\right) };j_{\left(
m\right) }\right) $ of shape $\left( n-m,m\right) $, we write $\left\{
t\right\} =\{\mathbf{i}_{\left( n-m\right) };\mathbf{j}_{\left( m\right) }\}$
(observe the boldface!) to indicate the tabloid defined by $\mathbf{i}%
_{\left( n-m\right) }=\left\{ i_{1},...,i_{n-m}\right\} $ and $\mathbf{j}%
_{\left( m\right) }=\left\{ j_{n-m+1},...,j_{n}\right\} $. In other words, $%
\{t\}$ is obtained as the two-block partition composed of the collection of
the entries of $i_{\left( n-m\right) }$ and the collection of the entries of
$j_{\left( m\right) }$. With the notation introduced at the previous point,
one has that\thinspace $\left\{ t\right\} =\gamma (\mathbf{j}_{\left(
m\right) })$.
\end{description}

\bigskip

\noindent \textbf{Example. }Let $n=5$ and $m=2$. Then, a tableau of shape $%
\left( 3,2\right) $ is $t=\left( i_{\left( 3\right) };j_{\left( 2\right)
}\right) $, where $i_{\left( 3\right) }=\left( 2,1,3\right) $ and $j_{\left(
2\right) }=\left( 5,4\right) $. The columns of $t$ are $C_{1}=\left(
2,5\right) $, $C_{2}=\left( 1,4\right) $ and $C_{3}=3$. The associated
tabloid is $\left\{ t\right\} =\{\mathbf{i}_{\left( 3\right) };\mathbf{j}%
_{\left( 2\right) }\}$, where $\mathbf{i}_{\left( 3\right) }=\left\{
1,2,3\right\} \in V^{\left( 2,3\right) }$ and $\mathbf{j}_{\left( 2\right)
}=\left\{ 4,5\right\} \in V^{\left( 3,2\right) }$.

\subsection{Actions of $\mathfrak{S}_{n}$}

Fix as before $n\geq 2$ and $1\leq m\leq n/2$.

\bigskip

\noindent \underline{\textit{Actions on tableaux}}. For every $x\in
\mathfrak{S}_{n}$ and every tableaux $t=\left( i_{\left( n-m\right)
};j_{\left( m\right) }\right) $, the action of $x$ on $t$ is defined as
follows:%
\begin{equation}
xt=\left( xi_{\left( n-m\right) };xj_{\left( m\right) }\right) \text{,}
\label{zzz}
\end{equation}%
where $xi_{\left( n-m\right) }=\left( x\left( i_{1}\right) ,...,x\left(
i_{n-m}\right) \right) $ and $xj_{\left( m\right) }=\left( x\left(
j_{n-m+1}\right) ,...,x\left( j_{n}\right) \right) $.

\bigskip

\noindent \underline{\textit{Actions on tabloids}}. For every $x\in
\mathfrak{S}_{n}$ and every tabloid $\gamma (\mathbf{b }_{\left(
m\right) })=\{\mathbf{a}_{\left( n-m\right) };$ $\mathbf{b}%
_{\left( m\right) }\}$, we set
\begin{eqnarray}
x\gamma (\mathbf{b}_{\left( m\right) }) &=&x\{\left\{ a
_{1},...,a_{n-m}\right\} ;\left\{ b_{n-m+1},...,b_{n}\right\} \}  \label{google} \\
&=&\{\left\{ x\left( a_{1}\right) ,...,x\left( a_{n-m}\right)
\right\} ;\left\{ x\left( b_{n-m+1}\right) ,...,x\left( b_{n}\right) \right\} \}  \notag
\end{eqnarray}%
In particular, for every tableau $t$, one has $x\left\{ t\right\} =\left\{
xt\right\} $.

\bigskip

\noindent \underline{$\mathfrak{S}_{n}$-\textit{modules}}. The symmetric
group $\mathfrak{S}_{n}$ acts on $V^{\left( n-m,m\right) }$ in the standard
way, namely: for every $x\in \mathfrak{S}_{n}$ and for every $\mathbf{j}%
_{\left( m\right) }=\left\{ j_{1},...,j_{m}\right\} \in V^{\left(
n-m,m\right) }$,%
\begin{equation}
x\mathbf{j}_{\left( m\right) }=\left\{ x\left( j_{1}\right) ,...,x\left(
j_{m}\right) \right\} .  \label{actionSym0}
\end{equation}

\bigskip

\noindent \textbf{Remark. }By combining the above introduced notational
conventions, one sees that, for every $x\in \mathfrak{S}_{n}$ and for every $%
\mathbf{j}_{\left( m\right) }=V^{\left( n-m,m\right) }$,
\begin{equation*}
x\gamma (\mathbf{j}_{\left( m\right) })=\gamma (x\mathbf{j}_{\left( m\right)
})\text{,}
\end{equation*}%
that is, $x$ transforms the tabloid generated by $\mathbf{j}_{\left(
m\right) }$ into the tabloid generated by $x\mathbf{j}_{\left( m\right) }$.
Also, if $t=\left( i_{\left( n-m\right) };j_{\left( m\right) }\right) $,
then, for every $x\in \mathfrak{S}_{n}$,
\begin{equation*}
x\left\{ t\right\} =\left\{ xt\right\} =x\gamma (\mathbf{j}_{\left( m\right)
})=\gamma (x\mathbf{j}_{\left( m\right) }).
\end{equation*}

\bigskip

The complex vector space of all complex-valued functions on $V^{\left( n-m,m\right)
} $ is written $L\left( V^{\left( n-m,m\right) }\right) $. Plainly, the
space $L\left( V^{\left( n-m,m\right) }\right) $ has dimension $\binom{n}{m}$%
, and a basis of $L\left( V^{\left( n-m,m\right) }\right) $ is given by the
collection $\{\mathbf{1}_{\mathbf{j}_{\left( m\right) }}:\mathbf{j}_{\left(
m\right) }\in V^{\left( n-m,m\right) }\}$, where $\mathbf{1}_{\mathbf{j}%
_{\left( m\right) }}\left( \mathbf{k}_{\left( m\right) }\right) =1$ if $%
\mathbf{k}_{\left( m\right) }=\mathbf{j}_{\left( m\right) }$ and $\mathbf{1}%
_{\mathbf{j}_{\left( m\right) }}\left( \mathbf{k}_{\left( m\right) }\right)
=0$ otherwise. The group $\mathfrak{S}_{n}$ acts on $L\left( V^{\left(
n-m,m\right) }\right) $ as follows: for $x\in \mathfrak{S}_{n}$, $\mathbf{k}%
_{\left( m\right) }\in V^{\left( n-m,m\right) }$ and $f\in L\left( V^{\left(
n-m,m\right) }\right) $,
\begin{eqnarray}
xf\left( \mathbf{k}_{\left( m\right) }\right) &=&f\left( x^{-1}\mathbf{k}%
_{\left( m\right) }\right) \text{, so that, in particular,}
\label{actionSym} \\
x\mathbf{1}_{\mathbf{j}_{\left( m\right) }} &=&\mathbf{1}_{x\mathbf{j}%
_{\left( m\right) }}\text{, \ \ }\mathbf{j}_{\left( m\right) }\in V^{\left(
n-m,m\right) }.  \notag
\end{eqnarray}%
When endowed with the action (\ref{actionSym}), the set $L\left( V^{\left(
n-m,m\right) }\right) $ carries a representation of $\mathfrak{S}_{n}$. In
this case, we say that $L\left( V^{\left( n-m,m\right) }\right) $ is the
\textsl{permutation module}\textbf{\ }associated with $\left( n-m,m\right) $%
, and we use the customary notation $L\left( V^{\left( n-m,m\right) }\right)
=M^{\left( n-m,m\right) }$ (see \cite[Section 2.1]{Sagan}).

\bigskip

\noindent \textbf{Remark. }Our definition of the permutation modules $%
M^{\left( n-m,m\right) }$ slightly differs from the one given e.g. in \cite[%
Definition 2.1.5]{Sagan}. Indeed, we define $M^{\left( n-m,m\right) }$ as
the vector space spanned by all indicators of the type $\mathbf{1}_{\mathbf{j%
}_{\left( m\right) }}$, $\mathbf{j}_{\left( m\right) }\in V^{\left(
n-m,m\right) }$, endowed with the action (\ref{actionSym}), whereas in the
above quoted reference $M^{\left( n-m,m\right) }$ is the space of all formal
linear combinations of tabloids of shape $\left( n-m,m\right) $, endowed
with the canonical extension of the action (\ref{google}). The two
definitions are equivalent, in the sense that they give rise to two
isomorphic $\mathfrak{S}_{n}$-modules. We will see that the definition of $%
M^{\left( n-m,m\right) }$ chosen in this paper allows a more transparent
connection with the theory of $U$-statistics based on random permutations.

\subsection{A decomposition of $M^{\left( n-m,m\right) }$}

We recall that the dual of $\mathfrak{S}_{n}$ coincides with the set $%
\left\{ \left[ S^{\lambda }\right] :\lambda \vdash n\right\} $, where $\left[
S^{\lambda }\right] $ is the equivalence class of all irreducible
representations of $\mathfrak{S}_{n}$ that are equivalent to a Specht module
of index $\lambda $ (see again \cite[Section 2.1]{Sagan}). For every $%
\lambda \vdash n$, we will denote by $\chi ^{\lambda }$ the character
associated with the class $\left[ S^{\lambda }\right] $, whereas $D_{\lambda }$
is the associate dimension. Observe that $\chi ^{\lambda }\in \mathbb{Z}$
for every $\lambda $ (see e.g. \cite[Section 13.1]{Serre}), and $D_{\lambda
} $ equals the number of standard tableaux (that is, tableaux with
increasing rows and columns) of shape $\lambda $. In particular $D_{\left(
n-1,1\right) }=n-1$ (see \cite[Section 2.5]{Sagan}).

The next result ensures that the module $M^{\left( n-m,m\right) }$ is
reducible. This fact is well-known (see e.g. \cite[Example 14.4, p. 52]%
{James} or \cite[pp. 134-139]{Diaconis}), and a proof is added here for the
sake of completeness.

\begin{proposition}\label{P : dec}
There exists a unique decomposition of $M^{\left( n-m,m\right) }$ of the type%
\begin{equation}
M^{\left( n-m,m\right) }=K_{0}^{\left( n-m,m\right) }\oplus K_{1}^{\left(
n-m,m\right) }\oplus \cdot \cdot \cdot \oplus K_{m}^{\left( n-m,m\right) }%
\text{.}  \label{spechtDec}
\end{equation}%
Where the vector spaces (endowed with the action of $\mathfrak{S}_{n}$
described in (\ref{actionSym})) $K_{l}^{\left( n-m,m\right) }$ are such that
$K_{0}^{\left( n-m,m\right) }\in \left[ S^{\left( n\right) }\right] $, and $%
K_{l}^{\left( n-m,m\right) }\in \left[ S^{\left( n-l,l\right) }\right] $, $%
l=1,...,m$.
\end{proposition}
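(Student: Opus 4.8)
The plan is to prove the decomposition in two stages: first establish the direct-sum decomposition with the correct Specht-module types as an \emph{existence} statement, then argue \emph{uniqueness}. The standard tool for both is classical character theory for $\mathfrak{S}_{n}$, applied to the permutation module $M^{(n-m,m)}$.

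For the existence of the decomposition, I would compute the decomposition of $M^{(n-m,m)}$ into irreducibles via Young's rule (the special two-row case of the Littlewood--Richardson rule). The key combinatorial fact is that the permutation module on $m$-subsets of $[n]$ decomposes as $M^{(n-m,m)}\cong\bigoplus_{l=0}^{m}S^{(n-l,l)}$, each irreducible occurring with multiplicity exactly one. Concretely, one shows that $M^{(n-m,m)}$ contains a unique copy of each $S^{(n-l,l)}$ for $0\le l\le m$ by verifying the multiplicity via the inner product of characters, namely $\langle\chi^{M},\chi^{(n-l,l)}\rangle=1$ for $0\le l\le m$ and $0$ otherwise. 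Since $\chi^{(n)}$ is the trivial character, the summand $K_{0}^{(n-m,m)}$ is the line of constant functions, sitting in $[S^{(n)}]$; the remaining summands $K_{l}^{(n-m,m)}$ for $l=1,\dots,m$ are the isotypic components carrying $[S^{(n-l,l)}]$.

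The uniqueness is where I would be most careful, and it is the cleanest step rather than the hardest: it follows immediately from the fact that each irreducible $S^{(n-l,l)}$, $0\le l\le m$, occurs in $M^{(n-m,m)}$ with multiplicity exactly one. By elementary representation theory, when an isotypic component of a module is a single irreducible (multiplicity one), that component is \emph{canonically determined} as the image of the corresponding projection operator, and admits no nontrivial $\mathfrak{S}_{n}$-invariant complement within its isotypic class. Hence the direct summands $K_{l}^{(n-m,m)}$ are forced to be exactly these isotypic components, and the decomposition (\ref{spechtDec}) is unique. I would phrase this via the projection formula: the $\mathfrak{S}_{n}$-equivariant projector onto the $[S^{(n-l,l)}]$-isotypic component is $p_{l}=\frac{D_{(n-l,l)}}{n!}\sum_{x\in\mathfrak{S}_{n}}\overline{\chi^{(n-l,l)}(x)}\,x$, and since each multiplicity is one, $K_{l}^{(n-m,m)}=p_{l}\bigl(M^{(n-m,m)}\bigr)$ is uniquely specified.

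The main obstacle is the multiplicity computation itself: verifying $\langle\chi^{M},\chi^{(n-l,l)}\rangle=1$. Rather than grinding through the character values directly, I expect the slickest route is the iterated branching/restriction argument. One observes that $M^{(n-m,m)}$ as an $\mathfrak{S}_{n}$-module is the induced module $\mathrm{Ind}_{\mathfrak{S}_{n-m}\times\mathfrak{S}_{m}}^{\mathfrak{S}_{n}}\mathbf{1}$, so by Frobenius reciprocity the multiplicity of $S^{(n-l,l)}$ equals the multiplicity of the trivial representation in the restriction $S^{(n-l,l)}\big|_{\mathfrak{S}_{n-m}\times\mathfrak{S}_{m}}$. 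Young's rule then gives this multiplicity as the number of semistandard tableaux of shape $(n-l,l)$ and content $(n-m,m)$, which for two-row shapes with $l\le m$ is readily seen to be exactly one. This establishes both the form of the decomposition and the multiplicity-one property needed for uniqueness, completing the argument.
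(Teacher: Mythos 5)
Your proposal is correct and follows essentially the same route as the paper: Young's rule applied to $M^{(n-m,m)}$, with the key observation that the Kostka number $K_{(n-l,l),(n-m,m)}$ equals one because there is a unique semistandard filling of shape $(n-l,l)$ with $n-m$ ones and $m$ twos. Your explicit uniqueness argument via multiplicity-one isotypic components and the central projectors $p_{l}$ is a welcome elaboration of a step the paper leaves implicit (it reduces the proposition to the equivalence $M^{(n-m,m)}\cong S^{(n)}\oplus\bigoplus_{l=1}^{m}S^{(n-l,l)}$ without comment), but it is the same underlying argument, not a different approach.
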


\begin{proof}
It is sufficient to prove that
\begin{equation*}
M^{\left( n-m,m\right) }\cong S^{\left( n\right) }\oplus
\bigoplus\limits_{l=1}^{m}S^{\left( n-l,l\right) }\text{,}
\end{equation*}%
where \textquotedblleft\ $\cong $ \textquotedblright\ indicates equivalence
between representations of $\mathfrak{S}_{n}$. According Young's Rule (see
e.g. \cite[Th. 2.11.2]{Sagan}), we know that%
\begin{equation*}
M^{\left( n-m,m\right) }\cong S^{\left( n\right) }\oplus
\bigoplus\limits_{l=1}^{m}K_{n,l,m}S^{\left( n-l,l\right) }\text{,}
\end{equation*}%
where the integers $K_{n,l,m}$ (known as \textsl{Kostka numbers}) count the
number of generalized semistandard tableaux of shape $\left( n-l,l\right) $
and type $\left( n-m,m\right) $. This is equivalent to saying $K_{n,l,m}$
counts the ways of arranging $n-m$ copies of $1$ and $m$ copies of $2$ in a
Ferrer diagram of shape $\left( n-l,l\right) $, in such a way that the rows
of the diagram are weakly increasing and the columns are strictly
increasing. Since there is just one way of doing this, one infers that $%
K_{n,l,m}=1$, and the proof is concluded.
\end{proof}

\bigskip

\noindent \textbf{Remarks. }(i)\textbf{\ }(\textit{Definition of two-block
Specht modules}) For the sake of completeness, we recall here the definition
of the modules $S^{\left( n\right) }$ and $S^{\left( n-m,m\right) }$, $1\leq
m\leq n/2$. First of all, one has that $S^{\left( n\right) }=\mathbb{C}$,
and therefore $\left[ S^{\left( n\right) }\right] $ is the class of
representations of $\mathfrak{S}_{n}$ that are equivalent to the trivial
representation. Now fix $1\leq m\leq n/2$. For every tableau $t=(
i_{\left( n-m\right) };j_{\left( m\right) }) $, define the columns $%
C_{1},...,C_{n-m}$ according to (\ref{ColonneA}) and (\ref{ColonneB}). Then,
(a) for every $l=1,...,m$, write $\kappa _{C_{l}}$ for the formal operator
\begin{equation*}
\kappa _{C_{l}}=\text{Id.}-(i_{l}\rightarrow j_{l})\text{,}
\end{equation*}%
where $\left( i_{l}\rightarrow j_{l}\right) $ indicates the element of $%
\mathfrak{S}_{n}$ given by the translation sending $i_{l}$ to $j_{l}$, and
(b) define the composed operator $\kappa _{t}=\kappa _{C_{1}}\kappa
_{C_{2}}\cdot \cdot \cdot \kappa _{C_{m}}$. Then, the Specht module of shape
$\left( n-m,m\right) $ is the $\mathfrak{S}_{n}$-invariant subspace of $%
M^{\left( n-m,m\right) }$ spanned by the elements of the type%
\begin{equation}
\kappa _{t}\mathbf{1}_{\mathbf{j}_{\left( m\right) }}\text{, \ \ where }%
t=\left( i_{\left( n-m\right) };j_{\left( m\right) }\right) \text{ is a
tableau;}  \label{spechtgen}
\end{equation}%
note that, in the formula (\ref{spechtgen}), $t$ and $\mathbf{j}_{\left(
m\right) }$ are related by the fact that $t=\left( i_{\left( n-m\right)
};j_{\left( m\right) }\right) $, and $\left\{ t\right\} =\{\mathbf{i}%
_{\left( n-m\right) };\mathbf{j}_{\left( m\right) }\}$.

\noindent (ii) Consider for instance the case $n=6$ and $m=2$, and select
the tableau $t=\left\{ \left( 1,2,3,4\right) ;\left( 5,6\right) \right\} $.
One has that $\mathbf{j}_{\left( 2\right) }=\left\{ 5,6\right\} $,
\begin{equation*}
\kappa _{t}=\left( \text{Id.}-\left( 1\rightarrow 5\right) \right) \left(
\text{Id.}-\left( 2\rightarrow 6\right) \right) \text{,}
\end{equation*}%
and one deduces that an element of $S^{\left( 4,2\right) }$ is given by%
\begin{equation*}
\kappa _{t}\mathbf{1}_{\mathbf{j}_{\left( 2\right) }}=\mathbf{1}_{\left\{
5,6\right\} }-\mathbf{1}_{\left\{ 1,6\right\} }-\mathbf{1}_{\left\{
5,2\right\} }+\mathbf{1}_{\left\{ 1,2\right\} }\text{.}
\end{equation*}

\noindent (iii) By recurrence, one deduces from Proposition \ref{P : dec} that the dimension of $K_l^{(n-m,m)}$, and therefore of $S^{(n-l,l)}$, is $D_{(n-l,l)}=\binom{n}{l} - \binom{n}{l-1}$, $l\leq n/2$.

\noindent (iv) From the previous discussion, we infer that $%
K_{0}^{\left( n-m,m\right) }=S^{\left( n\right) }=\mathbb{C}$.

\section{Uniform random permutations\label{S : URP}}

Fix $n\geq 2$. We consider a uniform \textsl{random permutation}\textbf{\ }$%
X $ of $\left[ n\right] $. This means that $X=X\left( \omega \right) $ is a
random element with values in $\mathfrak{S}_{n}$, defined on some finite
probability space $\left( \Omega ,\mathcal{F},\mathbf{P}\right) $ and such
that, $\forall x\in \mathfrak{S}_{n}$, $\mathbf{P}\left( X=x\right) =\left(
n!\right) ^{-1}$. For $1\leq m\leq n/2$ as before, we will write $%
X_{\left( m\right) }\left( \omega \right) =\left( X\left( 1\right)
,...,X\left( m\right) \right) \left( \omega \right) $, and also, for every $y\in\mathfrak{S}_n$, $(Xy)_{(m)} = \{Xy(1),...,Xy(m)\}$. 
Observe that $Xy$ indicates the product of the deterministic permutation $y$ with the random permutation $X$. It is clear that $%
X_{\left( m\right) }$ is an \textsl{exchangeable} vector, having the law of
the first $m$ extractions without replacement from the set $\left[ n\right] $
(see e.g. Aldous \cite{Aldous} for any unexplained notion about
exchangeability). A random variable $T$ is called a (complex-valued) \textsl{%
symmetric statistic} of $X_{\left( m\right) }$ if $T$ has the form
\begin{equation*}
T=f\left( \left\{ X\left( 1\right) ,...,X\left( m\right) \right\} \right)
\text{, \ \ for some }f\in L\left( V^{\left( n-m,m\right) }\right) \text{.}
\end{equation*}%
In other words, a symmetric statistic is a random variable deterministically
depending on the realization of $X_{\left( m\right) }$ as a non-ordered set.
Note that, by a slight abuse of notation, in what follows we will write $%
f\left( \left\{ X\left( 1\right) ,...,X\left( m\right) \right\} \right)
=f\left( X_{\left( m\right) }\right) $ (other analogous conventions will be
tacitly adopted).

We also write $L_{s}^{2}\left( X_{\left( m\right) }\right) $ to indicate the
Hilbert space of symmetric statistics of $X_{\left( m\right) }$, endowed
with the inner product
\begin{eqnarray}
\left\langle f_{1}\left( X_{\left( m\right) }\right) ,f_{2}\left( X_{\left(
m\right) }\right) \right\rangle _{\mathbf{P}} &=&\mathbf{E}\left[
f_{1}\left( X_{\left( m\right) }\right) \overline{f_{2}\left( X_{\left(
m\right) }\right) }\right]  \label{innPr1} \\
&=&\frac{1}{n!}\sum_{x\in \mathfrak{S}_{n}}f_{1}\left( x\left\{
1,...,m\right\} \right) \overline{f_{2}\left( x\left\{ 1,...,m\right\}
\right) }  \label{pomi} \\
&=&\dbinom{n}{m}^{-1}\sum_{\mathbf{k}_{\left( m\right) }\in V^{\left(
n-m,m\right) }}f_{1}\left( \mathbf{k}_{\left( m\right) }\right) \overline{%
f_{2}\left( \mathbf{k}_{\left( m\right) }\right) }\text{.}  \notag
\end{eqnarray}%
Since the sum in (\ref{pomi}) runs over the whole set $\mathfrak{S}_{n}$, it
is clear that $\left\langle \cdot ,\cdot \right\rangle _{\mathbf{P}}$
induces a $\mathfrak{S}_{n}$-invariant inner product on $M^{\left( n-m,m\right) }$ given by
\begin{equation}
\left\langle f_{1},f_{2}\right\rangle _{\left( n-m,m\right) }=\left\langle
f_{1}\left( X_{\left( m\right) }\right) ,f_{2}\left( X_{\left( m\right)
}\right) \right\rangle _{\mathbf{P}},\text{ \ \ }f_{1},f_{2}\in M^{\left(
n-m,m\right) };  \label{innPr2}
\end{equation}%
in particular, the $\mathfrak{S}_n$-invariance of $\left\langle \cdot ,\cdot
\right\rangle _{\left( n-m,m\right) }$ yields that the spaces $K_{i}^{\left(
n-m,m\right) }$ and $K_{j}^{\left( n-m,m\right) }$ are orthogonal with
respect to $\left\langle \cdot ,\cdot \right\rangle _{\left( n-m,m\right) }$
for every $0\leq i\neq j\leq m$.

With every $f\in M^{\left( n-m,m\right) }$, we associate the $\mathfrak{S}%
_{n}$-indexed stochastic process%
\begin{equation*}
Z_{f}\left( x,\omega \right) =Z_{f}\left( x\right) :=f\left( xX_{\left(
m\right) }\right) \text{, \ \ }x\in \mathfrak{S}_{n}\text{,}
\end{equation*}%
and, for every $\lambda \vdash n$, we define
\begin{eqnarray}
Z_{f}^{\lambda }\left( x,\omega \right) &=&Z_{f}^{\lambda }\left( x\right) :=%
\frac{D_{\lambda }}{n!}\sum_{g\in \mathfrak{S}_{n}}\chi ^{\lambda }\left(
g\right) f\left( (g^{-1}x)X_{\left( m\right) }\right)  \label{defChProj} \\
f^{\lambda }\left( \mathbf{l}_{\left( m\right) }\right) &=&\frac{D_{\lambda }%
}{n!}\sum_{x\in \mathfrak{S}_{n}}\chi ^{\lambda }\left( x\right) f\left(
x^{-1}\mathbf{l}_{\left( m\right) }\right) ,\text{ \ \ }\mathbf{l}_{\left(
m\right) }\in V^{\left( n-m,m\right) }\text{,}  \notag
\end{eqnarray}%
so that $f^{\lambda }\left( X_{\left( m\right) }\right) =Z_{f}^{\lambda
}\left( e\right) $, where $e$ is the identity element in $\mathfrak{S}_{n}$.

The following facts will be used in the subsequent analysis. The proofs are
standard and omitted -- see e.g. the results from \cite{PecPycke} and \cite{Serre} evoked below for further details.

\begin{description}
\item[(a)] Since (\ref{spechtDec}) holds, $f^{\lambda }=0$ for every $f\in
M^{\left( n-m,m\right) }$ if and only if $\lambda $ is different from $%
\left( n-l,l\right) $, $l=0,...,m$ (see e.g. \cite[Theorem 8, Section 2.6]%
{Serre}) and moreover: $f^{(n)}\in K_{0}^{\left( n-m,m\right) }$ and, for every $l=1,...,m$, $f^{\left( n-l,l\right) }\in
K_{l}^{\left( n-m,m\right) }$ (as defined in (\ref{spechtDec})).

\item[(b)] Thanks to exchangeability, for every $f\in M^{\left( n-m,m\right)
}$ the class
\begin{equation*}
\left\{ Z_{f},Z_{f}^{\left( n-l,l\right) }:l=0,...,m\right\} \text{,}
\end{equation*}%
has a $\mathfrak{S}_{n}$-invariant law, with respect to the canonical action
of $\mathfrak{S}_{n}$ on itself (i.e., $x\cdot y=xy$, $x,y\in \mathfrak{S}%
_{n}$).

\item[(c)] Due to the orthogonality of isotypical spaces (see e.g. (see \cite[Theorem
4.4.5]{DuiKolk}, and also \cite[Theorem 4-3]{PecPycke}), for every $x,y\in \mathfrak{S}_{n}$, $%
f,h\in M^{\left( n-m,m\right) }$ and $0\leq i\neq j\leq m$,%
\begin{eqnarray}
&& \mathbf{E}\left[ Z_{f}^{\left( n-i,i\right) }\left( x\right) \overline{%
Z_{h}^{\left( n-j,j\right) }\left( y\right) }\right] =\mathbf{E}\left[
f^{\left( n-i,i\right) }\left( xX_{\left( m\right) }\right) \overline{%
h^{\left( n-j,j\right) }\left( yX_{\left( m\right) }\right) }\right] \label{Ostiano}\\
&& \mathbf{E}\left[
f^{\left( n-i,i\right) }\left( (Xx)_{\left( m\right) }\right) \overline{%
h^{\left( n-j,j\right) }\left( (Xy)_{\left( m\right) }\right) }\right] = 0,
\label{OS1}
\end{eqnarray}
where, here and in the sequel (by a slight abuse of notation) we use the convention $(n-0,0)=(n)$.
\item[(d)] Due to \cite[Theorem 4-4]{PecPycke} and point (a) above, for
every $x\in \mathfrak{S}_{n}$ and every $f\in M^{\left( n-m,m\right) }$,
\begin{equation}
Z_{f}\left( x\right) =Z_{f}^{\left( n\right) }\left( x\right)
+\sum_{l=1}^{m}Z_{f}^{\left( n-l,l\right) }\left( x\right) ,  \label{OS2}
\end{equation}%
where $Z_{f}^{\left( n\right) }\left( x\right) =\mathbf{E}\left[ Z_{f}\left(
x\right) \right] =\mathbf{E}\left[ f\left( X_{\left( m\right) }\right) %
\right] .$ In particular,%
\begin{equation}
f\left( X_{\left( m\right) }\right) =\mathbf{E}\left[ f\left( X_{\left(
m\right) }\right) \right] +\sum_{l=1}^{m}f^{\left( n-l,l\right) }\left(
X_{\left( m\right) }\right)  \label{OStat}
\end{equation}%
and therefore, for every $f,h\in M^{\left( n-m,m\right) }$,
\begin{equation}
\mathbf{E}\left[ f\left( X_{\left( m\right) }\right) \overline{h\left(
X_{\left( m\right) }\right) }\right] =\mathbf{E}\left[ f\left( X_{\left(
m\right) }\right) \right] \overline{\mathbf{E}\left[ h\left( X_{\left(
m\right) }\right) \right] }+\sum_{l=1}^{m}\mathbf{E}\left[ f^{\left(
n-l,l\right) }\left( X_{\left( m\right) }\right) \overline{h^{\left(
n-l,l\right) }\left( X_{\left( m\right) }\right) }\right]  \label{OScovr}
\end{equation}

\item[(e)] Due to \cite[Theorem 5-1]{PecPycke}, for every $0\leq i\neq j\leq
m$ and $f,h\in M^{\left( n-m,m\right) }$,%
\begin{equation}
\sum_{x\in \mathfrak{S}_{n}}Z_{f}^{\left( n-i,i\right) }\left( x,\omega
\right) \overline{Z_{h}^{\left( n-j,j\right) }\left( x,\omega \right) }%
=\sum_{x\in \mathfrak{S}_{n}}f^{\left( n-i,i\right) }\left( xX_{\left(
m\right) }\right) \overline{h^{\left( n-j,j\right) }\left( xX_{\left(
m\right) }\right) }=0.  \label{OS3}
\end{equation}
\end{description}

\section{Hoeffding spaces\label{S : Hoeff}}

We now define a class of subspaces of $L_{s}^{2}\left( X_{\left( m\right)
}\right) $ (the notation is the same as in \cite{ElPec, PecAop}): $SU_{0}=%
\mathbb{C}$, and, for $l=1,...,m$, $SU_{l}$ is the vector subspace generated
by the functionals of $X_{\left( m\right) }$ of the type%
\begin{equation}
T_{\phi }\left( X_{\left( m\right) }\right) =\sum_{\left\{
k_{1},...,k_{l}\right\} \in V^{\left( m-l,l\right) }}\phi \left( X\left(
k_{1}\right) ,...,X\left( k_{l}\right) \right) \text{,}  \label{USt}
\end{equation}%
for some $\phi \in L\left( V^{\left( n-l,l\right) }\right) $. A random
variable such as (\ref{USt}) is called a $U$\textsl{-statistic} based on $%
X_{\left( m\right) }$, with a \textsl{symmetric kernel} $\phi $ of order $l$.
One has that $SU_{l}\subset SU_{l+1}$ (see e.g. \cite{PecAop}) and $SU_{m}=L_{s}^{2}\left( X_{\left(
m\right) }\right) $. The collection of the \textsl{symmetric Hoeffding spaces%
}\textbf{\ }associated to $X_{\left( m\right) }$, noted $\left\{
SH_{l}:l=0,...,m\right\} $ is defined as follows: $SH_{0}=SU_{0}$, and
\begin{equation*}
SH_{l}=SU_{l}\cap SU_{l-1}^{\perp }\text{,}
\end{equation*}%
where the symbol $\perp $ means orthogonality with respect to the inner
product $\left\langle \cdot ,\cdot \right\rangle _{\mathbf{P}}$ defined in (%
\ref{innPr1}), so that
\begin{equation*}
L_{s}^{2}\left( X_{\left( m\right) }\right) =\bigoplus\limits_{l=0}^{m}SH_{l}%
\text{,}
\end{equation*}%
where the direct sum $\bigoplus $ is again in the sense of $\left\langle
\cdot ,\cdot \right\rangle _{\mathbf{P}}$.

Following \cite[Section 2]{BG1}, we define the real coefficients%
\begin{eqnarray}
d_{l,j} &=&\prod_{r=j}^{l-1}\frac{n-r}{n-r-j}\text{, \ \ }l=2,3,...,m\text{,
}1\leq j\leq l-1\text{,}  \label{coefficz} \\
d_{l,l} &=&N_{l,l}=1\text{, \ \ }l=1,...,m\text{,}  \notag \\
N_{l,j} &=&-\sum_{i=j}^{l-1}\dbinom{l-j}{i-j}d_{l,i}N_{i,j}\text{, \ \ }%
l=2,3,...,m\text{, \ \ }1\leq j\leq l-1.  \notag
\end{eqnarray}

The following result can be proved by using the content of \cite[Section 2]%
{BG1}, or as a special case of \cite[Theorem 11]{PecAop}.

\begin{proposition}
\label{P : supercali}Keep the assumptions and notation of this section.
Then, for $l=1,...,m$, the following assertions are equivalent:

\begin{description}
\item[(i)] $f\left( X_{\left( m\right) }\right) \in SH_{l};$

\item[(ii)] there exists $\phi \in L\left( V^{\left( n-l,l\right) }\right) $
such that%
\begin{equation}
f\left( X_{\left( m\right) }\right) =\sum_{\left\{ k_{1},...,k_{l}\right\}
\in V^{\left( m-l,l\right) }}\phi \left( X\left( k_{1}\right) ,...,X\left(
k_{l}\right) \right) \text{,}  \label{DegUstats}
\end{equation}%
and
\begin{equation*}
\mathbf{E}\left[ \phi \left( X\left( 1\right) ,...,X\left( l\right) \right)
\mid X\left( 1\right) ,...,X\left( l-1\right) \right] =0\text{.}
\end{equation*}
\end{description}

Moreover, for every $h\left( X_{\left( m\right) }\right) \in L_{s}^{2}\left(
X_{\left( m\right) }\right) $, the orthogonal projection of $h\left(
X_{\left( m\right) }\right) $ on $SH_{l}$, $l=1,...,m$, is given by
\begin{equation*}
\mathsf{proj}\left( h\left( X_{\left( m\right) }\right) \mid SH_{l}\right)
=\sum_{\left\{ k_{1},...,k_{l}\right\} \in V^{\left( m-l,l\right) }}\phi
_{h}^{\left( l\right) }\left( X\left( k_{1}\right) ,...,X\left( k_{l}\right)
\right) ,
\end{equation*}%
where, for every $\left\{ j_{1},...,j_{l}\right\} \in V^{\left( n-l,l\right)
}$,%
\begin{eqnarray}
&&\phi _{h}^{\left( l\right) }\left( j_{1},..,j_{l}\right)  \label{Hoeff} \\
&=&d_{m,l}\sum_{a=1}^{l}N_{l,a}\sum_{1\leq i_{1}<...<i_{a}\leq l}\mathbf{E}%
\left[ h\left( X_{\left( m\right) }\right) -\mathbf{E}\left( h\left(
X_{\left( m\right) }\right) \right) \mid X\left( 1\right)
=j_{i_{1}},...,X\left( a\right) =j_{i_{a}}\right] .  \notag
\end{eqnarray}
\end{proposition}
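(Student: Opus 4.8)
The plan is to obtain both assertions as specializations of known results: the equivalence between membership in $SH_l$ and the degenerate-kernel representation, together with the projection formula (\ref{Hoeff}), is exactly the content of \cite[Section 2]{BG1} for extractions without replacement, and it is also recovered from the general theory of Hoeffding decompositions for weakly independent exchangeable vectors in \cite[Theorem 11]{PecAop}. Taking the second route, I would first verify that $X_{(m)}$ meets the hypotheses of that theory, namely that the sequence of extractions is weakly independent in the sense of \cite{PecAop} and hence Hoeffding-decomposable; this is where the standing assumption $1 \leq m \leq n/2$ enters, and it is established for extractions without replacement in \cite{BG1, BG2}. Once this is in place, the statement is read off by matching the general objects with the present notation.

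Conceptually, the backbone is the decomposition of the space of $U$-statistics into degenerate layers. Writing $D_j$ for the space of $U$-statistics of order $j$ whose kernel $\phi$ satisfies the degeneracy condition $\mathbf{E}[\phi(X(1),\ldots,X(j)) \mid X(1),\ldots,X(j-1)] = 0$ (so that, by the tower property, every conditional expectation of $\phi$ given fewer than $j$ arguments also vanishes), the key structural fact is that $SU_l = \bigoplus_{j=0}^l D_j$ and that the layers $D_j$ are mutually orthogonal for $\langle \cdot, \cdot\rangle_{\mathbf{P}}$. Granting this, the definition $SH_l = SU_l \cap SU_{l-1}^\perp$ forces $SH_l = D_l$, which is precisely the equivalence (i) $\Leftrightarrow$ (ii): the kernel furnished by (ii) puts $f(X_{(m)})$ in $D_l \subseteq SH_l$, while conversely any $f(X_{(m)}) \in SH_l \subseteq SU_l$ decomposes along the $D_j$ and, being orthogonal to $SU_{l-1} = \bigoplus_{j<l} D_j$, must coincide with its $D_l$-component, which supplies a degenerate representative.

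For the projection formula, I would use that, once $\mathsf{proj}(h(X_{(m)}) \mid SH_l)$ is known to be a degenerate $U$-statistic of order $l$, it is determined by a single symmetric kernel $\phi_h^{(l)}$. This kernel is pinned down by requiring that it reproduce, order by order, the conditional expectations of the centered statistic $h(X_{(m)}) - \mathbf{E}[h(X_{(m)})]$. Writing those conditional expectations in terms of the degenerate kernels gives a triangular linear system whose entries are the hypergeometric moments of sampling without replacement; the coefficients $d_{l,a}$ and $N_{l,a}$ of (\ref{coefficz}) are defined precisely so as to invert this system, and substituting the solution produces the kernel (\ref{Hoeff}).

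The step I expect to be the genuine obstacle is this last inversion: checking that the explicit coefficients $d_{l,a}$ and $N_{l,a}$ correctly recover the degenerate kernels from the conditional expectations of $h$ in the without-replacement covariance structure, where, in contrast to the i.i.d. case, conditioning on additional extracted values alters the law of the remaining ones and the naive M\"{o}bius inversion fails. This is the computational core of \cite[Section 2]{BG1}; in the reduction to \cite[Theorem 11]{PecAop} it is absorbed into the general inversion lemma, at the price of verifying weak independence for $X_{(m)}$. By contrast, the orthogonality of the layers $D_j$ and the decomposition $SU_l = \bigoplus_{j\leq l} D_j$ are softer, following from exchangeability together with the same inversion.
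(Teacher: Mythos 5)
Your proposal is correct and follows essentially the same route as the paper, which offers no self-contained proof but appeals precisely to \cite[Section 2]{BG1} and \cite[Theorem 11]{PecAop}, noting only that formula (\ref{Hoeff}) comes from a recursive M\"{o}bius-type inversion on the subset lattice of $\left[ n\right]$. Your sketch of the layer decomposition $SU_{l}=\bigoplus_{j\leq l}D_{j}$, the identification $SH_{l}=D_{l}$, and the triangular inversion pinned down by the coefficients $d_{l,a}$ and $N_{l,a}$ is a faithful (and somewhat more explicit) account of exactly the argument the paper delegates to those references.
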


\medskip

The kernel $\phi $ of the $U$-statistic $f\left( X_{\left( m\right) }\right)
$ appearing in (\ref{DegUstats}) is said to be \textsl{completely degenerated%
}. Completely degenerated kernels are related to the notion of \textit{weak
independence} in \cite[Theorem 6]{PecAop}. Note that, in the above quoted
references, the content of Proposition \ref{P : supercali} is proved for
real valued symmetric statistics (the extension of such results to complex
random variables is immediate: just consider separately the real and the
imaginary parts of each statistic). Formula (\ref{Hoeff}) completely
characterizes the symmetric Hoeffding spaces associated to $X_{\left(
m\right) }$: it can be obtained by recursively applying an appropriate
version of the M\"{o}bius inversion formula (see e.g. \cite[Exercise 18,
Section 5.6]{Sagan}), on the lattice of the subsets of $\left[ n\right] $
(see also \cite[Theorem 11]{PecAop}, for a generalization of (\ref{Hoeff})
to the case of Generalized Urn Sequences). In the next section we state and
prove the main result of this note, that is, that the spaces $SH_{l}$, $%
l=1,...,m$, admit a further algebraic characterization in terms of Specht
modules.

\section{Hoeffding spaces and two-blocks Specht modules\label{S : HS//SM}}

\subsection{Main results and some consequences}

The main achievement of this note is the following statement, which is a
more precise reformulation of Theorem \ref{T : Main Intro}, as stated in the
Introduction. The proof is deferred to Section \ref{SS : Proofs !!}.

\begin{theorem}
\label{T : Main}Under the above notation and assumptions, for every $f\left(
X_{\left( m\right) }\right) \in L_{s}^{2}\left( X_{\left( m\right) }\right) $
and every $l=0,1,...,m$, the following assertions are equivalent:

\begin{enumerate}
\item $f\left( X_{\left( m\right) }\right) \in SH_{l};$

\item $f\in K_{l}^{\left( n-m,m\right) }$, where the $\mathfrak{S}_{n}$%
-module $K_{l}^{\left( n-m,m\right) }$ is defined through formula (\ref%
{spechtDec}) (in particular, $K_{l}^{\left( n-m,m\right) }\in \left[
S^{\left( n-l,l\right) }\right] $).
\end{enumerate}
\end{theorem}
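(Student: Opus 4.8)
The plan is to prove the equivalence by showing both $SH_l$ and $K_l^{(n-m,m)}$ coincide as subspaces of $M^{(n-m,m)}$, via the bridge supplied by the character projection $f \mapsto f^{(n-l,l)}$. The key observation is that fact (a) in Section \ref{S : URP} already identifies the image of the projector $f \mapsto f^{(n-l,l)}$ with $K_l^{(n-m,m)}$, so the whole problem reduces to matching the Hoeffding projection $\mathsf{proj}(\cdot \mid SH_l)$ of Proposition \ref{P : supercali} with the character projection $\cdot^{(n-l,l)}$ of formula (\ref{defChProj}). Thus I would recast both decompositions $L_s^2(X_{(m)}) = \bigoplus_{l=0}^m SH_l$ and $M^{(n-m,m)} = \bigoplus_{l=0}^m K_l^{(n-m,m)}$ as orthogonal direct sums with respect to the same inner product $\langle \cdot, \cdot\rangle_{(n-m,m)}$ from (\ref{innPr2}), and argue that two orthogonal decompositions indexed by the same set $\{0,1,\dots,m\}$ must agree summand-by-summand once we show the corresponding projections send the same generating elements to the same place.

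First I would establish the inclusion $SH_l \subseteq K_l^{(n-m,m)}$. Take $f(X_{(m)}) \in SH_l$; by Proposition \ref{P : supercali}(ii) it is a completely degenerate $U$-statistic of order $l$. I would compute its character projections $f^{(n-j,j)}$ for $j \ne l$ and show they vanish, using the degeneracy condition $\mathbf{E}[\phi(X(1),\dots,X(l)) \mid X(1),\dots,X(l-1)] = 0$ together with the explicit form of the projector in (\ref{defChProj}). The degeneracy forces all lower-order conditional expectations of $\phi$ to collapse, which should kill every character component except the one of shape $(n-l,l)$. Combined with fact (a), which guarantees $f^{(n-l,l)} \in K_l^{(n-m,m)}$ and $f = \sum_{j=0}^m f^{(n-j,j)}$, this yields $f = f^{(n-l,l)} \in K_l^{(n-m,m)}$.

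For the reverse inclusion $K_l^{(n-m,m)} \subseteq SH_l$, I would run the symmetric argument: starting from $f \in K_l^{(n-m,m)}$, I would use fact (a) to see that $f = f^{(n-l,l)}$ (all other character components vanish), and then show via the Hoeffding projection formula (\ref{Hoeff}) that $\mathsf{proj}(f(X_{(m)}) \mid SH_j) = 0$ for all $j \ne l$. Since both $\{SH_j\}$ and $\{K_j^{(n-m,m)}\}$ are orthogonal decompositions of the same space with respect to $\langle\cdot,\cdot\rangle_{(n-m,m)}$, an element orthogonal to all $SH_j$ with $j\ne l$ must lie in $SH_l$. Alternatively, once both inclusions $SH_l \subseteq K_l^{(n-m,m)}$ are in hand for every $l$, a dimension count closes the argument: summing over $l$ exhausts the whole space $M^{(n-m,m)}$ on both sides, so the inclusions cannot be strict.

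The main obstacle I anticipate is the bookkeeping that identifies the two projection operators. The character projector in (\ref{defChProj}) is an average over $\mathfrak{S}_n$ weighted by $\chi^{(n-l,l)}$, whereas the Hoeffding projector in (\ref{Hoeff}) is an alternating sum of conditional expectations governed by the Möbius-type coefficients $d_{l,j}$ and $N_{l,j}$ from (\ref{coefficz}). Showing these two a priori different formulas produce the same operator on $M^{(n-m,m)}$ is where the real work lies; I expect the cleanest route is to avoid a direct comparison of coefficients and instead exploit uniqueness: both operators are self-adjoint idempotents with respect to $\langle\cdot,\cdot\rangle_{(n-m,m)}$, both are $\mathfrak{S}_n$-equivariant, and both have image inside the isotypic component $[S^{(n-l,l)}]$. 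Since the isotypic decomposition is unique and each relevant isotypic component appears with multiplicity one in $M^{(n-m,m)}$ (by Proposition \ref{P : dec}), the orthogonal projection onto it is unique, forcing the two operators to coincide. This multiplicity-one feature is precisely what makes the identification rigid and is the conceptual heart of the proof.
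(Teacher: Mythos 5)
Your closing appeal to multiplicity-one rigidity is indeed the engine of the paper's own proof (it is exactly the uniqueness of the decomposition (\ref{spechtDec})), but your argument has a genuine gap at its pivot: you never prove that the Hoeffding projection has image inside the $(n-l,l)$-isotypic component, i.e.\ that a completely degenerated symmetric $U$-statistic of order $l$ has vanishing character components $f^{\left( n-j,j\right) }$ for $j\neq l$. Both of your routes presuppose this. In the direct route you say the degeneracy condition ``should kill every character component except the one of shape $(n-l,l)$'' --- that is the theorem restated, not derived: degeneracy gives you orthogonality to $SU_{l-1}$, but to convert this into $f^{\left( n-j,j\right) }=0$ for $j<l$ you would need $K_{j}^{\left( n-m,m\right) }\subseteq SU_{l-1}$, which is again the statement being proved, and for $j>l$ your outline produces no vanishing at all. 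In the ``clean'' route you list, as a shared property of the two idempotents, that ``both have image inside the isotypic component $\left[ S^{\left( n-l,l\right) }\right] $'' --- for the character projector (\ref{defChProj}) this is fact (a), but for the Hoeffding projector (\ref{Hoeff}) it is precisely the missing content, so the uniqueness argument is circular as stated.

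The gap is not cosmetic: the invariance, orthogonality and dimension bookkeeping you do have cannot close it, because the dimensions $D_{\left( n-l,l\right) }$ need not be distinct. Take $n=6$, $m=3$: then $D_{\left( 5,1\right) }=D_{\left( 3,3\right) }=5$, and the a priori assignment $SH_{1}\leftrightarrow K_{3}^{\left( 3,3\right) }$, $SH_{3}\leftrightarrow K_{1}^{\left( 3,3\right) }$ is consistent with every constraint in your outline: $\mathfrak{S}_{6}$-invariance of each summand, mutual orthogonality, $SH_{0}=K_{0}$, and all dimension counts ($\dim SU_{1}=6=1+5$ either way, $\dim SU_{2}=15$, $\dim SU_{3}=20$). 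What rules the swap out is knowledge of $SU_{l}$ \emph{as a module}, and this is exactly what the paper's Lemma \ref{L : Ugo} supplies and your proposal lacks: the indicators $\eta _{\mathbf{i}_{\left( l\right) }}$, $\mathbf{i}_{\left( l\right) }\in V^{\left( n-l,l\right) }$, form a basis of $SU_{l}$, and $\eta _{\mathbf{i}_{\left( l\right) }}\mapsto \mathbf{1}_{\mathbf{i}_{\left( l\right) }}$ is a $\mathfrak{S}_{n}$-isomorphism onto $M^{\left( n-l,l\right) }$, so by Young's rule $SU_{l}$ contains exactly the irreducibles $S^{\left( n\right) },S^{\left( n-1,1\right) },...,S^{\left( n-l,l\right) }$, each with multiplicity one (in particular no $S^{\left( 3,3\right) }$ inside $SU_{1}$ in the example above). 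This forces $SU_{l}=K_{0}^{\left( n-m,m\right) }\oplus \cdot \cdot \cdot \oplus K_{l}^{\left( n-m,m\right) }$ and hence $SH_{l}=SU_{l}\cap SU_{l-1}^{\perp }=K_{l}^{\left( n-m,m\right) }$. If you add this identification (or an equivalent computation of the isotypic content of $SU_{l}$), the rest of your plan does go through; in particular, once $SH_{l}\subseteq K_{l}^{\left( n-m,m\right) }$ holds for every $l$, your final observation that the inclusions exhaust the whole space and therefore cannot be strict is correct.
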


We now list some consequences of Theorem \ref{T : Main}. They can be
obtained by properly combining Proposition \ref{P : supercali} with the five
facts (\textbf{a})--(\textbf{e}), as listed at the end of Section \ref{S :
URP}.

\begin{corollary}
Under the above notation and assumptions,

\begin{enumerate}
\item for every $l=1,...,m,$ $f\in M^{\left( n-m,m\right) }$ and $\mathbf{i}%
_{\left( m\right) }=\left\{ i_{1},...,i_{m}\right\} \in V^{\left(
n-m,m\right) }$,%
\begin{align}
& f^{\left( n-l,l\right) }\left( \mathbf{i}_{\left( m\right) }\right)
\label{CEXP} \\
& =\frac{D_{\left( n-l,l\right) }}{n!}\sum_{x\in \mathfrak{S}_{n}}\chi
^{\left( n-l,l\right) }\left( x\right) f\left( x^{-1}\mathbf{i}_{\left(
m\right) }\right) \\
& =\sum_{\left\{ i_{1},...,i_{l}\right\} \subseteq \mathbf{i}_{\left(
m\right) }}d_{m,l}\sum_{a=1}^{l}N_{l,a} \times \notag \\
& \quad\quad\quad\quad\sum_{1\leq s_{1}<...<s_{a}\leq l}%
\mathbf{E}\left[ f\left( X_{\left( m\right) }\right) -\mathbf{E}\left(
f\left( X_{\left( m\right) }\right) \right) \mid X\left( 1\right)
=i_{s_{1}},...,X\left( a\right) =i_{s_{a}}\right] ,  \notag
\end{align}
where $D_{(n-l,l)} = \binom{n}{l} - \binom{n}{l-1}$.
\item for every $l=1,...,m$, every symmetric $U$-statistic, based on $X_{\left(
m\right) }$ and with a completely degenerated kernel of order $l$, has the
form (\ref{CEXP}) for some $f\in M^{\left( n-m,m\right) }$. It follows that $%
SH_{l}$ is an irreducible $\mathfrak{S}_{n}$-module, carrying a
representation in $\left[ S^{\left( n-l,l\right) }\right] $.
\end{enumerate}
\end{corollary}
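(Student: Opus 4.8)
The plan is to read the two displayed equalities in (\ref{CEXP}) separately. The first equality is nothing but the definition (\ref{defChProj}) of the character projection $f^{(n-l,l)}$, so no work is required there; the entire content of part 1 lies in the second equality, which identifies the abstract projection $f^{(n-l,l)}$ with the concrete Hoeffding kernel furnished by Proposition \ref{P : supercali}. Accordingly, I would reduce everything to a single structural fact: that for $f\in M^{(n-m,m)}$ the statistic $f^{(n-l,l)}(X_{(m)})$ is exactly the orthogonal projection of $f(X_{(m)})$ onto $SH_l$.

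To establish this, I would combine facts (a) and (d) with Theorem \ref{T : Main}. Fact (d) supplies the decomposition (\ref{OStat}), $f(X_{(m)})=\mathbf{E}[f(X_{(m)})]+\sum_{l=1}^{m}f^{(n-l,l)}(X_{(m)})$, while fact (a) guarantees $f^{(n-l,l)}\in K_{l}^{(n-m,m)}$. By Theorem \ref{T : Main}, the map $f\mapsto f(X_{(m)})$ carries $K_{l}^{(n-m,m)}$ onto $SH_l$, so each summand $f^{(n-l,l)}(X_{(m)})$ lies in $SH_l$. Since $L_{s}^{2}(X_{(m)})=\bigoplus_{l}SH_l$ is an orthogonal direct sum for $\langle\cdot,\cdot\rangle_{\mathbf{P}}$, which restricts to the invariant inner product (\ref{innPr2}) making the spaces $K_l$ mutually orthogonal, uniqueness of the orthogonal components forces $f^{(n-l,l)}(X_{(m)})=\mathsf{proj}(f(X_{(m)})\mid SH_l)$.

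With this identification in hand, I would invoke Proposition \ref{P : supercali} with $h=f$ to write $\mathsf{proj}(f(X_{(m)})\mid SH_l)=\sum_{\{k_{1},\dots,k_{l}\}}\phi_{f}^{(l)}(X(k_{1}),\dots,X(k_{l}))$, where $\phi_{f}^{(l)}$ is given by (\ref{Hoeff}); the right-hand side of (\ref{Hoeff}) is precisely the bracketed expression in the last line of (\ref{CEXP}). Evaluating this $U$-statistic on the realization $X_{(m)}=\mathbf{i}_{(m)}$ converts the sum over index sets $\{k_{1},\dots,k_{l}\}\in V^{(m-l,l)}$ into a sum over the $l$-subsets $\{i_{1},\dots,i_{l}\}$ of $\mathbf{i}_{(m)}$; because $X_{(m)}$ attains every element of $V^{(n-m,m)}$ with positive probability, the resulting identity of random variables is in fact an identity of functions on $V^{(n-m,m)}$, which is the second equality in (\ref{CEXP}). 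For part 2, Proposition \ref{P : supercali} identifies the symmetric $U$-statistics with completely degenerated kernel of order $l$ with the elements of $SH_l$, and Theorem \ref{T : Main} realizes $SH_l$ as the image of $K_{l}^{(n-m,m)}$. Applying part 1 to any $f\in K_{l}^{(n-m,m)}$ (for which $f^{(n-l,l)}=f$, the remaining character projections vanishing by fact (a)) exhibits such a statistic in the form (\ref{CEXP}); transporting the $\mathfrak{S}_{n}$-action along the bijection $f\mapsto f(X_{(m)})$ then makes $SH_l$ an $\mathfrak{S}_{n}$-module isomorphic to $K_{l}^{(n-m,m)}$, which by Proposition \ref{P : dec} lies in $[S^{(n-l,l)}]$ and is therefore irreducible.

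The step I expect to be the crux is the one in the second paragraph: pinning down that the representation-theoretic projection $f^{(n-l,l)}$ and the analytic Hoeffding projection onto $SH_l$ coincide. Once Theorem \ref{T : Main} is allowed to identify the two orthogonal decompositions, everything downstream is a matter of matching the explicit formula (\ref{Hoeff}) against (\ref{CEXP}) and unwinding the $U$-statistic, which is bookkeeping rather than genuine difficulty.
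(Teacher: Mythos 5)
Your argument is correct and is essentially the proof the paper intends: the paper gives no details for this corollary, stating only that it follows by ``properly combining Proposition \ref{P : supercali} with the five facts (a)--(e)'', and your chain --- fact (a) together with Theorem \ref{T : Main} places $f^{\left( n-l,l\right) }\left( X_{\left( m\right) }\right)$ in $SH_{l}$, the decomposition (\ref{OStat}) from fact (d) and the orthogonality of the Hoeffding spaces then force $f^{\left( n-l,l\right) }\left( X_{\left( m\right) }\right) =\mathsf{proj}\left( f\left( X_{\left( m\right) }\right) \mid SH_{l}\right)$, after which (\ref{Hoeff}) and the positivity of all atoms of the law of $X_{\left( m\right) }$ give the second equality in (\ref{CEXP}) as an identity of functions on $V^{\left( n-m,m\right) }$ --- is precisely that combination made explicit. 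Your treatment of part 2 (using Proposition \ref{P : supercali} to identify completely degenerated $U$-statistics of order $l$ with elements of $SH_{l}$, noting $f^{\left( n-l,l\right) }=f$ on $K_{l}^{\left( n-m,m\right) }$, and transporting the $\mathfrak{S}_{n}$-action along the bijection $f\mapsto f\left( X_{\left( m\right) }\right)$ so that irreducibility follows from Proposition \ref{P : dec}) likewise matches the paper's indicated route, so there is nothing to add.
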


For instance, by using \cite[Exercice 5.d, p. 87]{Sagan}, we deduce from (%
\ref{CEXP}) that for every $\mathbf{i}_{\left( m\right) }=\left\{
i_{1},...,i_{m}\right\} \in V^{\left( n-m,m\right) }$ and $f\in M^{\left(
n-m,m\right) }$,%
\begin{eqnarray*}
&&\frac{n-1}{n!}\sum_{x\in \mathfrak{S}_{n}}\left\{ \left( \text{number of
fixed points of }x\right) -1\right\} \times f\left( x\mathbf{i}_{\left(
m\right) }\right) \\
&=&\prod_{r=1}^{m-1}\frac{n-r}{n-r-1}\sum_{s=1}^{m}\mathbf{E}\left[ f\left(
X_{\left( m\right) }\right) -\mathbf{E}\left( f\left( X_{\left( m\right)
}\right) \right) \mid X\left( 1\right) =i_{s}\right] .
\end{eqnarray*}

The next result gives an algebraic explanation of a property of degenerated $%
U$-statistics, already pointed out -- in the more general framework of
Generalized Urn Sequences -- in \cite[Corollary 9]{PecAop}. Basically, it
states that the orthogonality, between two completely degenerated $U$%
-statistics of different orders, is preserved after shifting one of the two
arguments. It can be useful when determining the covariance between two $U$%
-statistics based on two urn sequences of different lenghts.

\begin{corollary}
Let $f,h\in M^{\left( n-m,m\right) }$ be such that $f\left( X_{\left(
m\right) }\right) \in SH_{j}$ and $h\left( X_{\left( m\right) }\right) \in
SH_{l}$ for some $1\leq j\neq l\leq m$. Consider moreover an element $%
\mathbf{k}_{\left( m\right) }=\left\{ k_{1},...,k_{m}\right\} \in V^{\left(
n-m,m\right) }$ such that, for some $r=0,...,m$, $\mathsf{Card}\left(
\mathbf{k}_{\left( m\right) }\cap \left\{ 1,...,m\right\} \right) =r$, and
note $X_{\left( m\right) }^{\prime }=\left( X\left( k_{1}\right)
,...,X\left( k_{m}\right) \right) $. Then,%
\begin{equation*}
\mathbf{E}\left( f\left( X_{\left( m\right) }\right) \overline{h\left(
X_{\left( m\right) }^{\prime }\right) }\right) =0.
\end{equation*}
\end{corollary}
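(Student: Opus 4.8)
The plan is to reduce the claim directly to the orthogonality relation (\ref{OS1}) by means of two preliminary translations: first re-expressing the shifted sample $X_{(m)}^{\prime}$ as a left-translate $(Xy)_{(m)}$ for a suitable deterministic permutation $y$, and second invoking Theorem \ref{T : Main} to replace $f$ and $h$ by their isotypic projections, which are the objects appearing in (\ref{OS1}).

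First I would observe that, since $\mathbf{k}_{(m)}=\{k_{1},\dots,k_{m}\}$ consists of $m$ distinct elements of $[n]$, there exists a deterministic permutation $y\in\mathfrak{S}_{n}$ with $y(i)=k_{i}$ for $i=1,\dots,m$ (extended arbitrarily to a bijection on the complement). Then $(Xy)(i)=X(y(i))=X(k_{i})$, so that in the notation of Section \ref{S : URP} one has $X_{(m)}^{\prime}=(Xy)_{(m)}$, while $X_{(m)}=(Xe)_{(m)}$ with $e$ the identity. The integer $r=\mathsf{Card}(\mathbf{k}_{(m)}\cap\{1,\dots,m\})$ plays no role in the argument and is purely descriptive: the conclusion holds uniformly in the amount of overlap, precisely because (\ref{OS1}) is valid for \emph{arbitrary} left shifts $x,y$.

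Next I would exploit the hypotheses $f(X_{(m)})\in SH_{j}$ and $h(X_{(m)})\in SH_{l}$. By Theorem \ref{T : Main} these are equivalent to $f\in K_{j}^{(n-m,m)}$ and $h\in K_{l}^{(n-m,m)}$. Since the character operators $f\mapsto f^{\lambda}$ of (\ref{defChProj}) realize the direct sum (\ref{spechtDec}) with $f^{(n-l,l)}\in K_{l}^{(n-m,m)}$ (point (a) of Section \ref{S : URP}), the uniqueness of the decomposition (\ref{spechtDec}) forces $f^{(n-j,j)}=f$ and $h^{(n-l,l)}=h$, all other projections of $f$ and $h$ being zero.

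Finally I would apply (\ref{OS1}) with its two indices set equal to $j$ and $l$ (admissible since $j\neq l$ and both lie in $\{1,\dots,m\}\subseteq\{0,\dots,m\}$) and with the two shifts chosen to be $e$ and $y$:
\begin{equation*}
\mathbf{E}\left[ f^{(n-j,j)}\big((Xe)_{(m)}\big)\,\overline{h^{(n-l,l)}\big((Xy)_{(m)}\big)}\right] =0.
\end{equation*}
Substituting $f^{(n-j,j)}=f$, $h^{(n-l,l)}=h$, $(Xe)_{(m)}=X_{(m)}$ and $(Xy)_{(m)}=X_{(m)}^{\prime}$ then gives $\mathbf{E}\big(f(X_{(m)})\,\overline{h(X_{(m)}^{\prime})}\big)=0$, as claimed. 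There is no genuine obstacle here once the setup is in place; the only point requiring care is the conceptual identification of the shifted sample $X_{(m)}^{\prime}$ with a left-translate $(Xy)_{(m)}$, since this is exactly the form in which the isotypic orthogonality (\ref{OS1}) is supplied by the results of Section \ref{S : URP}.
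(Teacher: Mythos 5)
Your proof is correct and is essentially the paper's own argument: both identify the shifted sample $X_{\left( m\right) }^{\prime }$ with $(Xy)_{\left( m\right) }$ for a deterministic $y\in \mathfrak{S}_{n}$ and conclude from (\ref{OS1}) with $x=e$ (note that $Xy$ is a \emph{right} translate of $X$, despite your ``left-translate'' wording). The only differences are cosmetic: you choose $y(i)=k_{i}$ directly instead of first invoking exchangeability to reduce to $\mathbf{k}_{\left( m\right) }=\left\{ 1,\dots,r,m+1,\dots,2m-r\right\} $ with $y$ a product of translations, and you usefully make explicit the step, left implicit in the paper, that Theorem \ref{T : Main} together with the directness of (\ref{spechtDec}) gives $f^{\left( n-j,j\right) }=f$ and $h^{\left( n-l,l\right) }=h$.
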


\begin{proof}
Due to the exchangeability of the vector $\left( X\left( 1\right)
,...,X\left( n\right) \right) $, we can assume, without loss of generality,
that
\begin{equation*}
\mathbf{k}_{\left( m\right) }=\left\{ 1,...,r,m+1,...,2m-r\right\} .
\end{equation*}%
Now introduce the permutation (written as a product of translations)
\begin{equation}
y=\left( r+1\rightarrow m+1\right) \left( r+2\rightarrow m+2\right) \cdot
\cdot \cdot \left( m\rightarrow 2m-r\right) ,  \label{specperm}
\end{equation}%
and note that
\begin{equation*}
\mathbf{E}\left( f\left( X_{\left( m\right) }\right) \overline{h\left(
X_{\left( m\right) }^{\prime }\right) }\right) =\mathbf{E}\left( f\left(
X_{\left( m\right) }\right) \overline{h\left( (Xy)_{\left( m\right)
}\right) }\right) ,
\end{equation*}%
so that the conclusion derives immediately from formula (\ref{OS1}), by
setting $x=e$ and $y$ as in (\ref{specperm}).
\end{proof}

\subsection{Remaining proofs \label{SS : Proofs !!}}

The key of the proof of Theorem \ref{T : Main} is nested in the following
Lemma.

\begin{lemma}
\label{L : Ugo}Let the previous notation prevail. Then,

\begin{enumerate}
\item for each $l=1,...,m$, a basis of $SU_{l}$ is given by the set of
random variables
\begin{equation*}
\left\{ \eta _{\mathbf{i}_{\left( l\right) }}\left( X_{\left( m\right)
}\right) :\mathbf{i}_{\left( l\right) }\in V^{\left( n-l,l\right) }\right\}
\text{,}
\end{equation*}%
where, for each $\mathbf{k}_{\left( m\right) }\in V^{\left( n-m,m\right) }$,%
\begin{equation}
\eta _{\mathbf{i}_{\left( l\right) }}\left( \mathbf{k}_{\left( m\right)
}\right) =\left\{
\begin{array}{lll}
1 &  & \text{if }\mathbf{i}_{\left( l\right) }\subseteq \mathbf{k}_{\left(
m\right) } \\
0 &  & \text{otherwise;}%
\end{array}%
\right.  \label{basisU}
\end{equation}

\item for each $l=1,...,m$, the restriction of the action (\ref{actionSym})
of $\mathfrak{S}_{n}$ to the vector subspace of $M^{\left( n-m,m\right) }$
generated by the set $\{\eta _{\mathbf{i}_{\left( l\right) }}:\mathbf{i}%
_{\left( l\right) }\in V^{\left( n-l,l\right) }\}$, defined in (\ref{basisU}%
), is equivalent to the action carried by the $\mathfrak{S}_{n}$-module $%
M^{\left( n-l,l\right) }$.
\end{enumerate}
\end{lemma}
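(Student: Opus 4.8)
The plan is to identify each $\eta_{\mathbf{i}_{(l)}}$ with a $U$-statistic having an indicator kernel, so that the spanning half of Part 1 comes for free and the only genuine difficulty becomes linear independence; then in Part 2 the module isomorphism is read off directly from the way $\mathfrak{S}_n$ permutes the $\eta$'s. First I would observe that, for fixed $\mathbf{i}_{(l)}\in V^{(n-l,l)}$, the random variable $\eta_{\mathbf{i}_{(l)}}(X_{(m)})$ is exactly the $U$-statistic (\ref{USt}) with kernel $\phi=\mathbf{1}_{\mathbf{i}_{(l)}}$: since $X$ is a bijection, the sum $\sum_{\{k_1,\dots,k_l\}\subseteq\{1,\dots,m\}}\mathbf{1}_{\mathbf{i}_{(l)}}(\{X(k_1),\dots,X(k_l)\})$ equals $1$ precisely when $\mathbf{i}_{(l)}\subseteq\{X(1),\dots,X(m)\}$ and $0$ otherwise. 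Hence each $\eta_{\mathbf{i}_{(l)}}$ lies in $SU_l$; and since the indicators $\{\mathbf{1}_{\mathbf{i}_{(l)}}:\mathbf{i}_{(l)}\in V^{(n-l,l)}\}$ form a basis of the kernel space $L(V^{(n-l,l)})$ and $\phi\mapsto T_\phi$ is linear, the family $\{\eta_{\mathbf{i}_{(l)}}\}$ spans $SU_l$. It then remains to prove that these $\binom{n}{l}$ functions are linearly independent.

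Independence is equivalent to the injectivity of $\phi\mapsto T_\phi$, i.e.\ to the implication that if $\sum_{\mathbf{i}_{(l)}\subseteq\mathbf{k}_{(m)}}c_{\mathbf{i}_{(l)}}=0$ for every $\mathbf{k}_{(m)}\in V^{(n-m,m)}$, then all $c_{\mathbf{i}_{(l)}}$ vanish. This is precisely the assertion that the set-inclusion matrix, with rows indexed by $V^{(n-m,m)}$, columns by $V^{(n-l,l)}$, and $(\mathbf{k}_{(m)},\mathbf{i}_{(l)})$-entry equal to $1$ iff $\mathbf{i}_{(l)}\subseteq\mathbf{k}_{(m)}$, has full column rank $\binom{n}{l}$. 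This is the main obstacle, and it is where the hypothesis $m\le n/2$ is used: since $l\le m$ we have $l+m\le 2m\le n$, and under the condition $l+m\le n$ the classical rank theory of set-inclusion matrices (Gottlieb) guarantees full rank $\binom{n}{l}$ over a field of characteristic zero. I would invoke this full-rank property to conclude that the $\eta_{\mathbf{i}_{(l)}}$ are independent; combined with the spanning property this proves Part 1 and simultaneously yields $\dim SU_l=\binom{n}{l}$.

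For Part 2 I would compute the action (\ref{actionSym}) on the basis $\{\eta_{\mathbf{i}_{(l)}}\}$. For $x\in\mathfrak{S}_n$ and $\mathbf{k}_{(m)}\in V^{(n-m,m)}$,
$$(x\eta_{\mathbf{i}_{(l)}})(\mathbf{k}_{(m)})=\eta_{\mathbf{i}_{(l)}}(x^{-1}\mathbf{k}_{(m)}),$$
which equals $1$ exactly when $\mathbf{i}_{(l)}\subseteq x^{-1}\mathbf{k}_{(m)}$, equivalently when $x\mathbf{i}_{(l)}\subseteq\mathbf{k}_{(m)}$, so that it equals $\eta_{x\mathbf{i}_{(l)}}(\mathbf{k}_{(m)})$; hence $x\eta_{\mathbf{i}_{(l)}}=\eta_{x\mathbf{i}_{(l)}}$. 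This is the same permutation rule that governs the basis $\{\mathbf{1}_{\mathbf{i}_{(l)}}\}$ of $M^{(n-l,l)}$ under (\ref{actionSym}), namely $x\mathbf{1}_{\mathbf{i}_{(l)}}=\mathbf{1}_{x\mathbf{i}_{(l)}}$. I would therefore define the linear map $\Theta:M^{(n-l,l)}\to\mathrm{span}\{\eta_{\mathbf{i}_{(l)}}\}$ by $\Theta(\mathbf{1}_{\mathbf{i}_{(l)}})=\eta_{\mathbf{i}_{(l)}}$ on basis vectors. By Part 1 this is a bijection between bases, and the matching permutation rules give $\Theta(xg)=x\,\Theta(g)$ for all $x\in\mathfrak{S}_n$ and all $g$; thus $\Theta$ is an isomorphism of $\mathfrak{S}_n$-modules, which is exactly the claimed equivalence with $M^{(n-l,l)}$.

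The map $\Theta$ is nothing but the kernel-to-$U$-statistic map $\phi\mapsto T_\phi$, so the representation-theoretic content of the lemma is that this map is an injective intertwiner. This is fully consistent, via Young's rule as used in Proposition \ref{P : dec}, with the multiplicity-one decompositions $M^{(n-l,l)}\cong S^{(n)}\oplus\bigoplus_{j=1}^{l}S^{(n-j,j)}$ and $M^{(n-m,m)}\cong S^{(n)}\oplus\bigoplus_{j=1}^{m}S^{(n-j,j)}$, the image of $\Theta$ being the sum of the first $l+1$ isotypic components of $M^{(n-m,m)}$.
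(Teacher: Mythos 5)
Your proof is correct and follows essentially the same route as the paper: you identify each $\eta_{\mathbf{i}_{(l)}}(X_{(m)})$ with the $U$-statistic $T_{\phi}$ for the kernel $\phi=\mathbf{1}_{\mathbf{i}_{(l)}}$ to get spanning, and you verify the permutation rule $x\eta_{\mathbf{i}_{(l)}}=\eta_{x\mathbf{i}_{(l)}}$ to build the basis-matching intertwiner (your $\Theta$ is the inverse of the paper's $\tau$). The one substantive difference is to your credit: the paper only proves spanning and then tacitly uses linear independence of the $\eta_{\mathbf{i}_{(l)}}$ when it defines $\tau$ by ``linearly extending'' a map on these generators, whereas you make this step explicit by reducing it to the full column rank of the set-inclusion matrix between $V^{(n-l,l)}$ and $V^{(n-m,m)}$ and invoking Gottlieb's theorem, correctly noting that the hypothesis $l\leq m\leq n/2$ supplies the needed condition $l+m\leq n$; this fills a genuine (if standard) gap in the paper's argument rather than deviating from it.
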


\begin{proof}
Fix $l=1,...,m$, and observe that, for every $\mathbf{i}_{\left( l\right)
}\in V^{\left( n-l,l\right) }$,%
\begin{equation*}
\eta _{\mathbf{i}_{\left( l\right) }}\left( X_{\left( m\right) }\right)
=\sum_{\left\{ k_{1},...,k_{l}\right\} \in V^{\left( m-l,l\right) }}\mathbf{1%
}_{\mathbf{i}_{\left( l\right) }}\left( \left\{ X\left( k_{1}\right)
,...,X\left( k_{l}\right) \right\} \right) \text{,}
\end{equation*}%
so that the first part of the statement follows from the definition of $%
SU_{l}$, and the fact that every $\phi \in V^{\left( m-l,l\right) }$ is a
linear combination of functions of the type $\mathbf{1}_{\mathbf{i}_{\left(
l\right) }}\left( \cdot \right) $. To prove the second part, first recall
that a basis of the $\mathfrak{S}_{n}$-module $M^{\left( n-l,l\right) }$ is
given by the set $\left\{ \mathbf{1}_{\mathbf{i}_{\left( l\right) }}\left(
\cdot \right) :\mathbf{i}_{\left( l\right) }\in V^{\left( n-l,l\right)
}\right\} $, and that the action of $\mathfrak{S}_{n}$ on $M^{\left(
n-l,l\right) }$ is completely described by the action%
\begin{equation*}
x\mathbf{1}_{\mathbf{i}_{\left( l\right) }}=\mathbf{1}_{x\mathbf{i}_{\left(
l\right) }}.
\end{equation*}%
We can therefore construct a $\mathfrak{S}_{n}$-isomorphism between $\left\{ \eta
_{\mathbf{i}_{\left( l\right) }}:\mathbf{i}_{\left( l\right) }\in V^{\left(
n-l,l\right) }\right\} $ and $M^{\left( n-l,l\right) }$ by linearly
extending the mapping
\begin{equation*}
\tau \left( \eta _{\mathbf{i}_{\left( l\right) }}\right) =\mathbf{1}_{%
\mathbf{i}_{\left( l\right) }},\text{ \ \ }\mathbf{i}_{\left( l\right) }\in
V^{\left( n-l,l\right) }\text{,}
\end{equation*}%
and by observing that, for every $\mathbf{k%
}_{\left( m\right) }\in V^{\left( n-m,m\right) }$, $\mathbf{i}_{\left(
l\right) }\in V^{\left( n-l,l\right) }$ and $x\in \mathfrak{S}_{n}$,%
\begin{equation*}
x\eta _{\mathbf{i}_{\left( l\right) }}\left( \mathbf{k}_{\left( m\right)
}\right) =\eta _{\mathbf{i}_{\left( l\right) }}\left( x^{-1}\mathbf{k}%
_{\left( m\right) }\right) =\eta _{x\mathbf{i}_{\left( l\right) }}\left(
\mathbf{k}_{\left( m\right) }\right) \text{.}
\end{equation*}
This concludes the proof.
\end{proof}

\bigskip

\noindent \textbf{End of the proof of Theorem \ref{T : Main}. }Since $%
SU_{0}=SH_{0}=K_{0}^{\left( n-m,m\right) }=\mathbb{C}$, the relation between
representations%
\begin{equation*}
M^{\left( n-l,l\right) }\cong S^{\left( n\right) }\oplus S^{\left(
n-1,1\right) }\oplus \cdot \cdot \cdot \oplus S^{\left( n-l,l\right) },\text{
\ \ }\forall l=1,...,m\text{,}
\end{equation*}%
along with Lemma \ref{L : Ugo}, implies that the restriction of the action (%
\ref{actionSym}) of $\mathfrak{S}_{n}$ to those $f\in L\left( V^{\left(
n-m,m\right) }\right) $ such that $f\left( X_{\left( m\right) }\right) \in
SH_{l}$ is an element of $\left[ S^{\left( n-l,l\right) }\right] $. This
yields that each one of the $m+1$ summands in the decomposition
\begin{equation*}
M^{\left( n-m,m\right) }=\mathbb{C}\oplus \bigoplus_{l=1}^{m}\left\{
f:f\left( X_{\left( m\right) }\right) \in SH_{l}\right\}
\end{equation*}%
is an irreducible $\mathfrak{S}_{n}$-submodule of $M^{\left( n-m,m\right) }$%
. Since the decomposition (\ref{spechtDec}) of $M^{\left( n-m,m\right) }$ is
unique, this gives%
\begin{equation*}
\left\{ f:f\left( X_{\left( m\right) }\right) \in SH_{l}\right\}
=K_{l}^{\left( n-m,m\right) }\text{,}
\end{equation*}%
as required. $\blacksquare $

\medskip

\noindent {\bf Acknowledgement.} We are grateful to Omar El-Dakkak for useful comments.

\end{document}